\def\arxiv#1{\href{http://arxiv.org/abs/#1}{\texttt{arXiv:#1}}}
\let\MF@diagram=\diagram
\def\diagram{\MF@diagram[LaTeXeqno,moreoptions]}
\let\epsilon\varepsilon
\let\newterm\emph
\def\mfbf#1{\hbox{\boldmath{$#1$}}}
\def\mathsmash#1{\hbox to 1em{\boldmath{$\displaystyle #1$\hss}}}
\def\ie{\emph{i.\,e.}}
\def\cf{\emph{cf.}}
\def\Z{\mathbb Z}
\def\Ab{\bar A}
\def\sA{s^{-1} A}
\def\AA{A'\otimes A''}
\def\tmu{\tilde\mu}
\def\bB{\bar B}
\def\Bone{\mathbf{1}} 
\def\cupone{\mathbin{\cup_1}}
\def\HH{\mathcal H}
\def\PP{\mathcal P}
\def\UU{\mathcal U}
\def\Hthree{\mathcal H_{3}}
\def\Ass{\mathcal{A}\mathit{ss}}
\DeclareMathOperator\End{\mathcal{E}\mkern-1.5mu\mathit{nd}}
\DeclareMathOperator{\Tot}{Tot}
\theoremstyle{plain}
\newtheorem{theorem}{Theorem}[section]
\newtheorem{proposition}[theorem]{Proposition}
\theoremstyle{definition}
\newtheorem{remark}[theorem]{Remark}
\newtheorem{example}[theorem]{Example}
\newtheorem{question}[theorem]{Question}
\theoremstyle{remark}
\newtheorem*{acknowledgements}{Acknowledgements}
\numberwithin{equation}{section}
\begin{document}

\title[Tensor products of homotopy Gerstenhaber algebras]{Tensor products of\\homotopy Gerstenhaber algebras}
\author{Matthias Franz}
\thanks{The author was supported by an NSERC Discovery Grant.}
\address{Department of Mathematics, University of Western Ontario,
      London, Ont.\ N6A\;5B7, Canada}
\email{mfranz@uwo.ca}
      
\subjclass[2010]{Primary 57T30; secondary 16E45}

\begin{abstract}
  On the tensor product of two homotopy Gerstenhaber algebras
  we construct a Hirsch algebra structure
  which extends the canonical dg~algebra structure.
  Our result applies more generally
  to tensor products of ``level~$3$ Hirsch algebras''
  and also to the Mayer--Vietoris double complex.
\end{abstract}

\maketitle

\section{Introduction}

Let $R$ be a commutative unital ring
and $A$ an augmented associative differential graded (dg) algebra over~$R$.
A \newterm{Hirsch algebra} structure on~$A$ is a (possibly non-associative)
multiplication in the normalized bar construction~$\bB A$ of~$A$ which is a morphism of
coalgebras and has the counit~$\Bone \in \bB A$
as a 
unit.
It is uniquely determined by its associated twisting cochain
\begin{equation}
  E\colon \bB A\otimes \bB A\to A.
\end{equation}
Because the map~$a_{1}\otimes b_{1}\mapsto E([a_{1}],[b_{1}])$
is essentially a $\cupone$~product for~$A$
(without strict Hirsch formulas),
the product of a Hirsch algebra is always commutative up to homotopy
in the naive sense.

Let $a=[a_{1}|\cdots|a_{k}]\in \bB_{k}A$ and $b=[b_{1}|\cdots|b_{l}]\in \bB_{l}A$.
A Hirsch algebra with $E(a,b)=0$ for all~$k>1$
is called a ``level~$3$ Hirsch algebra'' in~\cite{Kadeishvili:2005}.
It is a \newterm{homotopy Gerstenhaber algebra}
(or \newterm{homotopy G-algebra})
if in addition the resulting multiplication is associative.
Important examples of homotopy Gerstenhaber algebras are
the 
cochain complex of a simplicial set or topological space
\cite{Baues:1981},
the Hochschild cochains of an associative algebra
\cite{Kadeishvili:1988},~%
\cite[Sec.~5.1]{GetzlerJones:1994},~%
\cite{VoronovGerstenhaber:1995}
and the cobar construction of a dg~bialgebra over~$\Z_{2}$ \cite{Kadeishvili:2005}.

Let $A'$ and~$A''$ be two Hirsch algebras.
Then $A'\otimes A''$ is a dg~algebra,
again commutative up to homotopy in the naive sense.
In this paper we address the question of whether such a homotopy
is part of a system of higher homotopies.
We obtain the following result:

\begin{theorem}
  \label{tensor-level-3}
  Let $A'$ and $A''$ be two level~$3$ Hirsch algebras.
  Then $A'\otimes A''$ is a Hirsch algebra in a natural way.
  Moreover, the shuffle map
  $ 
    \bB A'\otimes \bB A''\to \bB(A'\otimes A'')
  $ 
  is multiplicative.
\end{theorem}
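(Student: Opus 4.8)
The plan is to phrase everything through twisting cochains. A Hirsch-algebra structure on an augmented dg~algebra~$A$ is the same datum as a twisting cochain $E_{A}\colon\bB A\otimes\bB A\to A$ with the unit normalization that makes~$\Bone$ a two-sided unit: the multiplication on~$\bB A$ is recovered from~$E_{A}$ because $\bB A$ is a tensor coalgebra, hence cofree. Under the level~$3$ hypothesis, $E_{A'}$ and~$E_{A''}$ are concentrated in the components $E(\Bone,-)$ and~$E([a],-)$, which, up to suspension, encode the dg~multiplications together with the brace operations of~$A'$ and of~$A''$. So the task is to build a twisting cochain $E\colon\bB(\AA)\otimes\bB(\AA)\to\AA$, where $\AA$ carries its canonical dg~algebra structure, and then to show that the shuffle map intertwines the resulting multiplication~$\mu_{\AA}$ on~$\bB(\AA)$ with the multiplication on $\bB A'\otimes\bB A''$ obtained from $\mu_{A'}\otimes\mu_{A''}$ by interchanging the two inner tensor factors.

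The key point is that, once $E$ is chosen well, multiplicativity of the shuffle map is automatic. The shuffle map $\nabla\colon\bB A'\otimes\bB A''\to\bB(\AA)$ is a morphism of dg~coalgebras for the deconcatenation coproducts, and $\bB(\AA)$ is cofree; hence two coalgebra maps into~$\bB(\AA)$ coincide as soon as their projections to~$\AA$ do, and multiplicativity of~$\nabla$ reduces to the single identity of linear maps
\[
   E\circ(\nabla\otimes\nabla)=\mathrm{pr}\circ(\mu_{A'}\otimes\mu_{A''})\circ(\text{interchange}),
\]
where $\mathrm{pr}\colon\bB A'\otimes\bB A''\to\AA$ is the linear map underlying~$\nabla$. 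I would use the right-hand side as a blueprint: it fixes the value of~$E$ on every element $\nabla(\xi)\otimes\nabla(\eta)$ in terms of the brace operations of~$A'$ and of~$A''$ and the shuffles building~$\bB(\AA)$, and the first task is to write down a natural formula for~$E$ on all of $\bB(\AA)\otimes\bB(\AA)$ that restricts to this prescription. Informally, each letter of a bar word in~$\bB(\AA)$ carries an~$A'$-part and an~$A''$-part, and $E$ applies the~$A'$-braces to the $A'$-parts and the~$A''$-braces to the $A''$-parts in all shuffle-compatible ways, with the Koszul signs.

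The crux, and what I expect to be the main obstacle, is verifying that this~$E$ is a twisting cochain; this is exactly what makes the induced coalgebra map~$\mu_{\AA}$ commute with the differentials. I would split the terms of the twisting-cochain equation into those built only from the differential and the level~$3$ Hirsch relations of~$A'$, the analogous terms for~$A''$, and cross-terms in which the two families of braces meet through the shuffles that assemble~$\bB(\AA)$. The first two groups close up using the level~$3$ Hirsch relations of~$A'$ and of~$A''$ \emph{separately}; the cross-terms must collapse by the combinatorial identities making~$\nabla$ a chain map and a coalgebra map (the Eilenberg--Zilber package). The real work is the bookkeeping of signs and of the shuffles indexing the terms. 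I also note that the~$E$ produced here is \emph{not} of level~$3$: longer words appear in its first argument, so the brace formalism alone no longer suffices and the full Hirsch-algebra machinery is genuinely needed.

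What remains is routine: one checks that $E$ vanishes whenever one of its two bar-word arguments is the empty word, apart from the component that recovers the identity, so that $\Bone$ is a two-sided unit and $\mu_{\AA}$ is a bona fide Hirsch multiplication, and that the whole construction is natural in~$A'$ and~$A''$. Multiplicativity of the shuffle map then follows from the displayed identity together with cofreeness, as explained above.
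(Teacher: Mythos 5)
Your reduction of the shuffle-map statement to an identity of twisting cochains, via the fact that $\bB(\AA)$ is a cofree coalgebra and all maps involved are coalgebra morphisms, is sound and is in the same spirit as the paper's argument for that step. The problem is that the heart of the theorem --- the existence of a twisting cochain $E\colon \bB(\AA)\otimes\bB(\AA)\to\AA$ at all --- is exactly what your proposal leaves open. The constraint $E\circ(\nabla\otimes\nabla)=\alpha\,\nabla\,m$ pins $E$ down only on the image of $\nabla\otimes\nabla$, which misses almost all of $\bB(\AA)\otimes\bB(\AA)$, and the phrase ``apply the $A'$-braces to the $A'$-parts and the $A''$-braces to the $A''$-parts in all shuffle-compatible ways, with Koszul signs'' is not a definition; moreover the cochain that actually works is not of this symmetric all-shuffles form. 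In the paper $E_a$ is defined recursively through auxiliary maps $G_a\colon BA\to B(A,A,A)$ into the two-sided bar construction, the roles of $A'$ and $A''$ are treated asymmetrically, and the admissible terms are subject to genuine combinatorial restrictions (see the length condition explained in Example~\ref{example-2-5}, and compare the explicit terms in Example~\ref{example-small}, e.g. $a_1'E'([a_2'],[b_1'])\,b_2'\otimes\cdots$), which a naive shuffle prescription would not reproduce. Incidentally, the unit normalization is $E(\Bone,-)=E(-,\Bone)=\alpha$, not vanishing on empty words.

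Second, your plan for verifying $d(E)=E\cup E$ --- split into $A'$-terms, $A''$-terms and cross-terms and let the cross-terms cancel by the Eilenberg--Zilber identities for $\nabla$ --- is a hope rather than an argument, and it misses where the real input lies. In the paper the twisting-cochain equation is proved by an induction computing $d(G_a)$, and the level~$3$ hypothesis is used in an essential way: besides \eqref{differential-E-hga}, the multiplicativity relation \eqref{product-E-hga}, i.e.\ $E'_{[a_1a_2]}=\pm\,\mu(E'_{[a_1]}\otimes E'_{[a_2]})\Delta$ and its analogue for $E''$, is invoked precisely to absorb the terms produced by the internal bar differential $\partial$ of the first argument. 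Nothing in the ``Eilenberg--Zilber package'' supplies these cancellations; this is exactly why the theorem is stated for level~$3$ Hirsch algebras and not for arbitrary Hirsch algebras, and the remark at the end of Section~\ref{sec:operad} shows the output cannot itself be of level~$3$, so new components $E_{kl}$ with $k>1$ must be constructed explicitly and their coherence \eqref{condition-Hirsch-differential} checked. Until you exhibit such a formula (or recursion) and carry out that verification, the proposal restates the problem rather than solving it.
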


The paper is organized as follows:
In Section~\ref{sec:notation} we introduce the notation needed
for the later parts.
The Hirsch algebra structure of~$A=A'\otimes A''$ is constructed
in Section~\ref{sec:construction}. Example~\ref{example-small} shows
how our twisting cochain~$E\colon \bB A\otimes \bB A\to A$
looks like in small degrees, and Example~\ref{example-2-5}
illustrates a general recipe for computing it explicitly.
Section~\ref{sec:proof} contains the proof that $E$ is well-defined
and that the shuffle map is multiplicative.
We conclude by reformulating our result in an operadic language
and applying it to the Mayer--Vietoris double complex
in Section~\ref{sec:operad}.

\begin{acknowledgements}
  The author would like to thank Tornike Kadeishvili for helpful discussions.
\end{acknowledgements}

\section{Notation}
\label{sec:notation}

We work in a cohomological setting, so that differentials are of degree~$+1$.
We denote the desuspension of a complex~$C$ by~$s^{-1}C$,
and the canonical chain map~$s^{-1}C\to C$ of degree~$1$ by~$\sigma$.
Anticipating the definition of the bar construction, we also write
$\sigma^{-1}(c)=[c]$ for~$c\in C$. The differential on~$s^{-1}C$
is given by~$d[c]=-[dc]$.

Let $A$ be an augmented, unital associative dg~algebra over~$R$
with multiplication map
$ 
  \mu_{A}\colon A\otimes A\to A
$ 
and augmentation
$ 
  \epsilon_{A}\colon A\to R
$. 
Denote the augmentation ideal of~$A$ by~$\Ab$,
so that $A=R\oplus\Ab$ canonically.

Note that there are canonical isomorphisms of complexes
\begin{subequations}
  \label{iso-sAA}
  \begin{align}
    \sA'\otimes A'' &\to s^{-1}(A'\otimes A''),
    &
    [a']\otimes a'' &\mapsto [a'\otimes a''], \\
    A'\otimes\sA'' &\to  s^{-1}(A'\otimes A''),
    &
    a'\otimes[a''] &\mapsto (-1)^{|a'|}[a'\otimes a''].
  \end{align}
\end{subequations}

Although we are mostly interested
in the normalized bar construction~$\bB A$ of~$A$,
it will be convenient to consider the unnormalized bar construction~$BA$ as well.
This is is the tensor coalgebra of the desuspension
of~$A$ (instead of~$\Ab$),
\begin{equation}
  BA = T(\sA) = \bigoplus_{k\ge0} (\sA)^{\otimes k}.
\end{equation}
We write $B_{k}A = (\sA)^{\otimes k}$ and
for elements $[a_{1}|\cdots|a_{k}]\in B_{k}A$.
The differential on~$BA$ is the sum of
the tensor product differential~$d_{\otimes}$
and the differential 
\begin{equation}
  \partial = \sum_{i=1}^{k-1}
  1^{\otimes i-1}\otimes\tmu\otimes 1^{\otimes k-i-1}
  \colon B_{k}A \to B_{k-1}A.
\end{equation}
Here $\tmu$ denotes the desuspension of~$\mu$,
\begin{equation}
  \tmu = \sigma^{-1}\mu(\sigma\otimes\sigma)\colon\sA\otimes\sA\to\sA.
\end{equation}
We write $\Bone \in B_{0}A$ for the counit of~$BA$
and $\alpha$ for the canonical twisting cochain
\begin{equation}
  \alpha\colon BA\to B_{1}A=\sA \stackrel{\sigma}\longrightarrow A.
\end{equation}


Let $M$ be a right dg-$A$-module and $N$ a left dg-$A$-module
with structure maps $\mu_{M}\colon M\otimes A\to M$
and $\mu_{N}\colon A\otimes N\to N$, respectively.
The two-sided bar construction of the triple~$(M,A,N)$ is
\begin{equation}
  B(M,A,N) = M\otimes BA\otimes N
\end{equation}
with differential~$d_{B(M,A,N)}=d_{M\otimes BA\otimes N}+\partial'$, where
\begin{equation}
  \label{partial-BAAA}
  \partial' = (\mu_{M}(1\otimes\alpha)\otimes1\otimes1)(1\otimes\Delta\otimes1)
    - (1\otimes1\otimes\mu_{N}(\alpha\otimes1))(1\otimes\Delta\otimes1),
\end{equation}
and with augmentation
\begin{subequations}
\begin{align}
  \epsilon_{B(M,A,N)}\colon B(M,A,N) &\to M\otimes_{A} N, \\
  m[a_{1}|\dots|a_{k}]n &\mapsto \begin{cases} m\otimes n & \text{if $k = 0$}, \\ 0 & \text{otherwise}. \end{cases}
\end{align}
\end{subequations}

We write repeated (co)associative maps in the form
\begin{align}
  \mu^{(k)} &\colon A^{\otimes k}\to A, \\
  \Delta^{(k)}   &\colon T(\sA)\to T(\sA)^{\otimes k},
\end{align}
for instance, and we agree that $\mu^{(0)}$ is the unit map~$\iota\colon R\to A$.

We will also need the concatenation operator
\begin{equation}
  \nabla\colon BA\otimes BA\to BA,
  \quad
  [a_{1}|\cdots|a_{k}]\otimes[b_{1}|\cdots|b_{l}]
  \mapsto
  [a_{1}|\cdots|a_{k}|b_{1}|\cdots|b_{l}],
\end{equation}
which satisfies
\begin{subequations}
\begin{equation}\label{d-concat}
  d(\nabla) = 
  \nabla^{(3)}
  (1\otimes\tmu\otimes1)(1\otimes\alpha\otimes\alpha\otimes1)
  (\Delta\otimes\Delta)
\end{equation}
and
\begin{align}\label{t-concat-diag-l}
  (\alpha\otimes1)\Delta\nabla
  &= (\alpha\otimes\nabla)(\Delta\otimes1)+\epsilon_{BA}\otimes(\alpha\otimes1)\Delta \\
  &= (1\otimes\nabla)\bigl((\alpha\otimes1)\Delta\otimes1\bigr)+\epsilon_{BA}\otimes(\alpha\otimes1)\Delta, \\
  \label{t-concat-diag-r}
  (1\otimes\alpha)\Delta\nabla
  &= (\nabla\otimes\alpha)(1\otimes\Delta)+(1\otimes\alpha)\Delta\otimes\epsilon_{BA} \\
  &= (\nabla\otimes1)\bigl(1\otimes(1\otimes\alpha)\Delta\bigr)+(1\otimes\alpha)\Delta\otimes\epsilon_{BA}.
\end{align}
\end{subequations}

\medbreak

On both the unnormalized and the normalized bar construction,
we will only consider multiplications which are coalgebra maps
and have the counit~$\Bone$ as a (two-sided) unit.
We do not require the multiplication to be associative.

Any such multiplication~$f\colon BA\otimes BA\to BA$
is uniquely determined
by its twisting cochain~$E=\alpha f$, which satisfies
\begin{subequations}
\begin{align}
  d(E) &= E\cup E, \\
  E(\Bone,-)=E(-,\Bone) &= \alpha.
\end{align}
\end{subequations}
We will only consider twisting cochains~$E$ satisfying both conditions.

Any multiplication on the normalized bar construction~$\bB A\subset BA$
can be extended to~$BA$ in a canonical way: Define
$E([1],\Bone )=E(\Bone ,[1])=1$ and,
for $a=[a_{1}|\cdots|a_{k}]$, $b=[b_{1}|\cdots|b_{l}]\in BA$,
set $E(a,b)=0$ if $k+l>1$ and some~$a_{i}=1$ or some~$b_{j}=1$.
Then $E(a,b)\in\Ab$ whenever $k+l>1$.
We call a twisting cochain having these additional properties
\emph{normalized}.
Any normalized twisting cochain~$E\colon BA\otimes BA\to A$
comes from a unique multiplication on~$\bB A$.

For a map~$E\colon BA\otimes BA\to A$
and~$a\in BA$ we define
\begin{equation}
  E_a\colon B A\to A,
  \quad
  b\mapsto E(a, b).
\end{equation}
In this notation, the 
properties 
of a multiplication on~$BA$
become
\begin{subequations}
\label{conditions-Hirsch}
\begin{align}
  \label{condition-Hirsch-differential}
  d( E_a) &= -  E_{d a}
  + \sum_{i=0}^{k} (-1)^{|[a_1|\cdots|a_{i}]|} \mu\bigl( E_{[a_1|\cdots|a_{i}]}
    \otimes  E_{[a_{i+1}|\cdots|a_k]}\bigr)\Delta \\
  \label{condition-Hirsch-1}
   E_{\Bone }(b) &= \alpha(b), \\
  \label{condition-Hirsch-0}
   E_{a}(\Bone ) &= \alpha(a)
\end{align}
\end{subequations}
for $a=[a_{1}|\cdots|a_{k}]$ and~$b=[b_{1}|\cdots|b_{l}]\in BA$.
If $E$ is normalized, then one additionally has
\begin{subequations}
\label{conditions-Hirsch-normalization}
\begin{align}
  \label{condition-Hirsch-normalization-2}
   E_{a}(b) &= 0
  \quad\text{if $k+l>1$ and some~$a_{i}=1$ or some~$b_{j}=1$} \\
  \label{condition-Hirsch-normalization-1}
  \epsilon( E_{a}(b)) &= 0
  \quad\text{if $k+l>1$}.
\end{align}
\end{subequations}  
If $E$ is of level~$3$, then
condition~\eqref{condition-Hirsch-differential} is equivalent
to the two identities
\begin{subequations}
  \label{conditions-level-3}
\begin{align}
  \label{differential-E-hga}
  d( E_{[a_{1}]}) &= -  E_{d[a_{1}]} 
    + \mu\bigl(\alpha\otimes  E_{[a_{1}]} + (-1)^{|a_1|-1}  E_{[a_{1}]}\otimes\alpha\bigr)\Delta, \\
  \label{product-E-hga}
   E_{[a_1 a_2]} &= (-1)^{|a_{1}|-1} \mu( E_{[a_1]}\otimes  E_{[a_2]})\Delta.
\end{align}
\end{subequations}

\section{Construction of the twisting cochain}
\label{sec:construction}

Let $A'$~and~$A''$ be two level~$3$ Hirsch algebras
with twisting cochains $ E'$~and~$ E''$, respectively.
Set $A=\AA$.
%
We are going to inductively define maps
$ 
  G_{a}\colon B A\to B(A,A,A)
$ 
of degree~$|a|+1$ for~$a\in BA$ and then set
$ 
   E_{a} = \epsilon_{B(A,A,A)}G_{a}.
$ 
In Section~\ref{sec:proof} we will show
that this defines a twisting cochain~$ E\colon BA\otimes BA\to A$,
hence a multiplication in~$BA$.
Moreover, if both $E$~and~$E''$ are normalized, then so is $E$.

For the construction as well as for the proof, it is convenient to identify
$B(A,A,A)$ with~$A\otimes BA\otimes A$. This is an isomorphism of graded $R$-modules;
the difference between the two differentials is given by~\eqref{partial-BAAA}.
We write $a=[a_{1}|\cdots|a_{k}]\in BA$ with~$a_{i}=a'_{i}\otimes a''_{i}$.

For~$k=0$ we set $ E_{\Bone }=\alpha$
as required by~\eqref{condition-Hirsch-1}.
We define for~$k=1$
\begin{equation}
  G_{[a_{1}]} = \bigl(( E'_{[a'_{1}]}\otimes\mu_{A''})\otimes1\otimes(\mu_{A'}\otimes E''_{[a''_{1}]})\bigr)\Delta^{(3)}
\end{equation}
and for~$k>1$
\begin{equation}
  G_{a} = M( E'_{[a'_{1}]}, E''_{[a''_{1}]},G_{[a_{2}|\cdots|a_{k}]}).
\end{equation}
Here we have used the abbreviation
\begin{multline}\label{definition-M}
  M(\tilde E',\tilde E'',\tilde G)
  = 
    (1\otimes1\otimes\mu_{A})
    \bigl((\tilde E'\otimes\mu_{A''})\otimes1\otimes(\mu_{A'}\otimes \tilde E'')\otimes1\bigr) \\
    (1\otimes\Delta\nabla^{(3)}\otimes1)
    (1\otimes1\otimes(\sigma^{-1}\otimes1\otimes1)\tilde G)
    \Delta^{(3)}
\end{multline}
for maps $\tilde E'\colon BA'\to A'$, $\tilde E''\colon BA''\to A''$
and $\tilde G\colon B A\to A\otimes B A\otimes A$.
Moreover, by $\tilde E'\otimes\mu_{A''}\colon B A\to A$ we mean the map
\begin{equation}
  [b_{1}|\cdots|b_{k}]\mapsto
  \Bigl(\prod_{i>j}(-1)^{(|b'_{i}|-1)|b''_{j}|}\Bigr)
  \tilde E'([b'_{1}|\cdots|b'_{k}])\otimes\mu_{A''}(b''_{1}\otimes\dots\otimes b''_{k}),
\end{equation}
and similarly by~$\mu_{A'}\otimes \tilde E''\colon B A\to A$
\begin{equation}
  [b_{1}|\cdots|b_{k}]\mapsto
  \Bigl(\prod_{i>j}(-1)^{|b'_{i}|(|b''_{j}|-1)}\Bigr)
  \mu_{A'}(b'_{1}\otimes\dots\otimes b'_{k})\otimes\tilde E''([b''_{1}|\cdots|b''_{k}]).
\end{equation}
By identities~\eqref{iso-sAA}, the differentials of these maps are
\begin{align}
  d(\tilde E'\otimes\mu_{A''}) &= d(\tilde E')\otimes\mu_{A''},
  &
  d(\mu_{A'}\otimes \tilde E'') &= \mu_{A'}\otimes d(\tilde E'').
\end{align}

Figures~\ref{figure-definition-G1} and~\ref{figure-definition-M}
visualize the definitions of $G_{[a_{1}]}$~and
of~$M(\tilde E',\tilde E'',\tilde G)$.

\begin{figure}[ht]
  \centering
\tikzstyle{node} = [draw, fill=black!20, rectangle,
  minimum width=2em, minimum height=2em]
\tikzstyle{box} = [draw, fill=black!20, rectangle,
  minimum width=4em, minimum height=2em]
\begin{tikzpicture}[scale=1]
  \node[draw, circle, fill=black!0] (start) at (3,2) {$B A$};
  \node[node] (a) at (3,-0) {$\Delta^{(3)}$};
  \node[coordinate] (b1) at (1,-1) {};
  \node[coordinate] (b2) at (3,-1) {};
  \node[coordinate] (b3) at (5,-1) {};
  \node[coordinate] (d1) at (1,-2) {};
  \node[coordinate] (d3) at (5,-2) {};
  \node[coordinate] (g1) at (1,-4) {};
  \node[coordinate] (g2) at (3,-4) {};
  \node[coordinate] (g3) at (5,-4) {};
  
  \draw (-0.5,0.75) -- (6.5,0.75) -- (6.5,-2.75) -- (-0.5,-2.75) -- cycle;
  
  \draw (start) -- (a);
  \draw (a) -- (b3)
    -- (d3) node[draw, rectangle, fill=black!20,
      minimum width=3em,minimum height=2em] {$\mu_{A'}\otimes  E''_{[a''_{1}]}$}
    -- (g3) node[draw, circle, fill=white] {$A$};
  \draw (a) -- (b2) -- (g2) node[draw, circle, fill=white] {$B A$};
  \draw (a) -- (b1)
    -- (d1) node[draw, rectangle, fill=black!20,
      minimum width=3em,minimum height=2em] {$ E'_{[a'_{1}]}\otimes\mu_{A''}$}
    -- (g1) node[draw, circle, fill=white] {$A$};
\end{tikzpicture}
  \caption{``Electronic diagram'' for~$G_{[a_{1}]}$}
  \label{figure-definition-G1}
\end{figure}
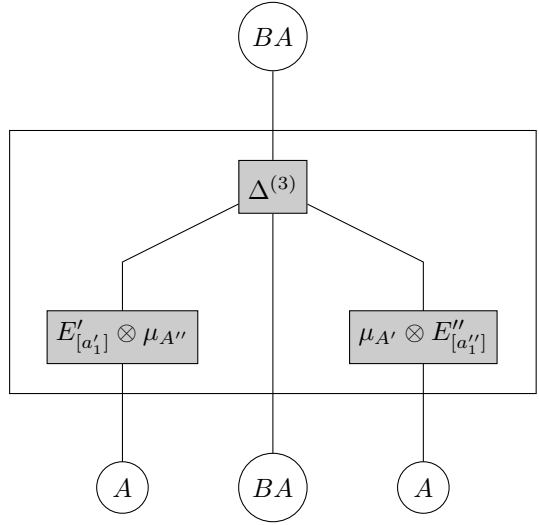

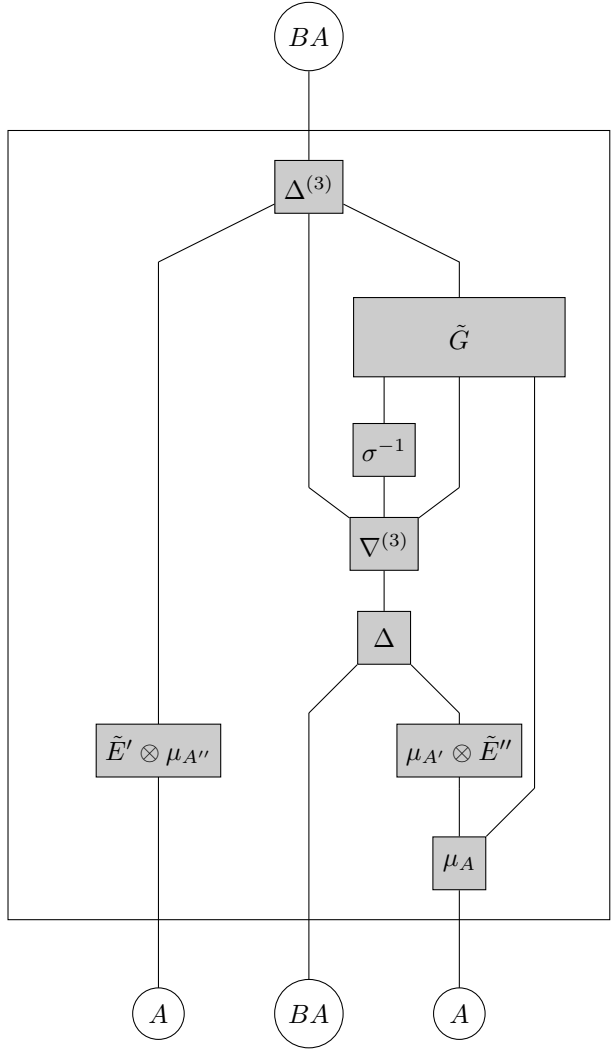
\begin{figure}[ht]
  \centering
\tikzstyle{node} = [draw, fill=black!20, rectangle,
  minimum width=2em, minimum height=2em]
\tikzstyle{box} = [draw, fill=black!20, rectangle,
  minimum width=4em, minimum height=2em]
\begin{tikzpicture}[scale=1]
  \node[draw, , circle, fill=white] (start) at (3,2) {$B A$};
  \node[node] (a) at (3,-0) {$\Delta^{(3)}$};
  \node[coordinate] (b1) at (1,-1) {};
  \node[coordinate] (b2) at (3,-1) {};
  \node[coordinate] (b3) at (5,-1) {};
  \node[coordinate] (G) at (5,-2) {};
  \node[coordinate] (G1) at (4,-2.25) {};
  \node[coordinate] (G2) at (5,-2.25) {};
  \node[coordinate] (G3) at (6,-2.25) {};
  \node[node] (c1) at (4,-3.5) {$\sigma^{-1}$};
  \node[coordinate] (c2) at (5,-4) {};
  \node[coordinate] (c0) at (3,-4) {};
  \node[node] (d1) at (4,-4.75) {$\nabla^{(3)}$};
  \node[node] (d2) at (4,-6) {$\Delta$};
  \node[coordinate] (d0) at (1,-7.5) {};
  \node[coordinate] (e1) at (3,-7) {};
  \node[coordinate] (e2) at (5,-7) {};
  \node[coordinate] (ef) at (5,-7.5) {};
  \node[coordinate] (f1) at (5,-8) {};
  \node[coordinate] (f2) at (6,-8) {};
  \node[node] (gg) at (5,-9) {$\mu_{A}$};
  \node[coordinate] (g1) at (1,-11) {};
  \node[coordinate] (g2) at (3,-11) {};
  \node[coordinate] (g3) at (5,-11) {};
  
  \draw (-1,0.75) -- (7,0.75) -- (7,-9.75) -- (-1,-9.75) -- cycle;r
  
  \draw (start) -- (a);
  \draw (a) -- (b1);
  \draw (a) -- (b2);
  \draw (G1) -- (c1);
  \draw (G2) -- (c2);
  \draw (G3) -- (f2);
  \draw (a) -- (b3) -- (G) node[draw, rectangle, fill=black!20,
    minimum width=8em,minimum height=3em] (GG) {$\tilde G$};
  \draw (b2) -- (c0);
  \draw (c0) -- (d1);
  \draw (c1) -- (d1);
  \draw (c2) -- (d1);
  \draw (d1) -- (d2);
  \draw (d2) -- (e1);
  \draw (d2) -- (e2);
  \draw (e2)
    -- (ef) node[draw, rectangle, fill=black!20,
      minimum width=3em,minimum height=2em] {$\mu_{A'}\otimes\tilde E''$}
    -- (f1);
  \draw (f1) -- (gg);
  \draw (f2) -- (gg);
  \draw (gg) -- (g3) node[draw, circle, fill=white] {$A$};
  \draw (b1)
    -- (d0) node[draw, rectangle, fill=black!20,
      minimum width=3em,minimum height=2em] {$\tilde E'\otimes\mu_{A''}$}
    -- (g1) node[draw, circle, fill=black!0] {$A$};
  \draw (e1) -- (g2) node[draw, circle, fill=white] {$B A$};
\end{tikzpicture}
  \caption{``Electronic diagram'' for~$M(\tilde E',\tilde E'',\tilde G)$}
  \label{figure-definition-M}
\end{figure}


\begin{example}
  \label{example-small}
  The following list shows $E(a,b)$
  for $a\in B_{k}A$ and $b\in B_{l}A$
  with $k\le2$ and $l\le2$. We are ignoring signs here.
  \begin{subequations}
  \begin{align}
    \label{E-1-1}
    E([a_{1}],[b_{1}])
    &= a'_{1}b'_{1}\otimes E''([a''_{1}],[b''_{1}])
    + E'([a'_{1}],[b'_{1}])\otimes b''_{1}a''_{1}, \\
    E([a_{1}],[b_{1}|b_{2}])
    &= a'_{1}b'_{1}b'_{2}\otimes E''([a''_{1}],[b''_{1}|b''_{2}]) \\
    &\quad + E'([a'_{1}],[b'_{1}])b'_{2}\otimes b''_{1}E''([a''_{1}],[b''_{2}]) \\
    &\quad + E'([a'_{1}],[b'_{1}|b'_{2}])\otimes b''_{1}b''_{2}a''_{2}, \\
    E([a_{1}|a_{2}],[b_{1}])
    &= a'_{1}E'([a'_{2}],[b'_{1}])\otimes E''([a''_{1}],[b''_{1}])a''_{2}, \\
    E([a_{1}|a_{2}],[b_{1}|b_{2}])
    &= a'_{1}E'([a'_{2}],[b'_{1}])b'_{2}\otimes E''([a''_{1}],[b''_{1}])E''([a''_{2}],[b''_{2}]) \\
    &\quad + a'_{1}E'([a'_{2}],[b'_{1}])b'_{2}\otimes E''([a''_{1}],[b''_{1}|b''_{2}])a''_{2} \\
    &\quad + a'_{1}E'([a'_{2}],[b'_{1}|b'_{2}])\otimes E''([a''_{1}],[b''_{1}b''_{2}])a''_{2} \\
    &\quad + a'_{1} b'_{1} E'([a'_{2}],[b'_{2}])\otimes E''([a''_{1}],[b''_{1}|b''_{2}])a''_{2} \\
    &\quad + E'([a'_{1}],[b'_{1}])E'([a'_{2}],[b'_{2}])\otimes b''_{1}E''([a''_{1}],[b''_{2}])a''_{2}.
  \end{align}
  \end{subequations}
\end{example}

\begin{example}
  \label{example-2-5}
  We give a general recipe for computing $E(a,b)$
  as in Example~\ref{example-small}.
  To show all features of the algorithm, we illustrate it with
  $a=\bigl[a'_{1}\otimes a''_{1}\bigm|a'_{2}\otimes a''_{2}\bigr]$
  and $b=\bigl[b'_{1}\otimes b''_{1}\bigm|\cdots\bigm|b'_{5}\otimes b''_{5}\bigr]$.
  We are going to explain how to obtain the terms~$c'\otimes c''\in\AA$
  appearing in~$E(a,b)$, again ignoring signs for simplicity.
  
  We start by looking at the component~$c'\in A'$.
  Take $[b'_{1}|\cdots|b'_{l}]$ and cut it into
  $2k$~pieces such that the pieces at positions~$3$,~$5$,~\ldots,~$2k-1$
  have length at least~$1$.
  In our example, one such decomposition is
  \begin{equation}
    [b'_{1}]\otimes[b'_{2}|b'_{3}]\otimes[b'_{4}|b'_{5}]\otimes\Bone.
  \end{equation}
  (The last piece has length~$0$.)
  Now apply $E'_{[a'_{i}]}$ to the $(2i-1)$-th group and then multiply everything
  together:
  \begin{equation}
    E'_{[a'_{1}]}([b'_{1}])\cdot b'_{2} b'_{3}\cdot E'_{[a'_{2}]}([b'_{4}|b'_{5}])\cdot 1
    = E'([a'_{1}],[b'_{1}]) b'_{2} b'_{3} E'([a'_{2}],[b'_{4}|b'_{5}]) = c'.
  \end{equation}
  These are the possible factors~$c'\in A'$ of the terms~$c'\otimes c''$
  appearing in~$E(a,b)$.
  
  \goodbreak
  
  For each such factor, we now describe which factors~$c''\in A''$ appear:
  Switch from primed to doubly primed variables and
  multiply the components within the odd-numbered groups 
  together to obtain
  \begin{equation}
    \bigl[b''_{1} \bigm| b''_{2} \bigm| b''_{3} \bigm| b''_{4} b''_{5} \bigr].
  \end{equation}
  Take the first factor of the tensor product (in the example, $[b''_{1}]$)
  apart. Cut the rest
  \begin{equation}
    \bigl[b''_{2}\bigm| b''_{3} \bigm| b''_{4} b''_{5} \bigr]
  \end{equation}
  into $k$~pieces. Only cuts satisfying the following condition
  are allowed: If some $b'_{j}$ appears as argument to~$E'_{[a'_{i}]}$,
  then the corresponding element~$b''_{j}$ can only appear
  in the $(i-1)$-th piece or earlier.
  In our example, this forces the second piece to be empty,
  hence the first piece is everything. Now plug the $i$-th piece
  into~$E''_{[a''_{i}]}$ and multiply everything together, including
  the first factor we have put apart earlier:
  \begin{equation}
    b''_{1}\cdot E''_{[a''_{1}]}([b''_{2}|b''_{3}|b''_{4} b''_{5}])\cdot E''_{[a''_{1}]}(\Bone)
    = b''_{1} E''([a''_{1}],[b''_{2}|b''_{3}|b''_{4} b''_{5}]) a''_{1} = c''.
  \end{equation}
  Summing up,
  \begin{equation}
    E'([a'_{1}],[b'_{1}]) b'_{2} b'_{3} E'([a'_{2}],[b'_{4}|b'_{5}])
    \otimes b''_{1} E''([a''_{1}],[b''_{2}|b''_{3}|b''_{4} b''_{5}]) a''_{1}
  \end{equation}
  is one term appearing in~$E(a,b)$.
  (There are $70$~terms altogether.)

  The reason for the length condition imposed in the first step
  is the following:
  The recursive definition of~$G_{a}$ together with
  the assignment~$ E_{a}=\epsilon G_{a}$ force
  everything that ``runs through'' $ E'_{[a'_{i}]}\otimes\mu$, $i>1$,
  to ``go through'' some~$\mu\otimes  E''_{[a''_{j}]}$ with~$j<i$ as well.
  Because $( E'_{[a'_{i}]}\otimes\mu)(\Bone)=a'_{i}\otimes1$ and $E''_{[a''_{j}]}(1)=0$,
  the length of the argument of~$E'_{[a'_{i}]}$ must therefore be at least~$1$
  if~$i>1$.
\end{example}

\begin{remark}
The multiplication in~$\bB(A'\otimes A'')$ is not
associative in general,
not even if it is so in $\bB A'$~and~$\bB A''$
(which means that $A'$~and~$A''$ are homotopy Gerstenhaber algebras).
In the latter case one has
\begin{equation}
  \bigl([a]\cdot [b]\bigr)\cdot [c] + [a]\cdot\bigl([b]\cdot[c]\bigr) =
  d(h)([a], [b], [c])
\end{equation}
for $a=a'\otimes a''$,~$b=b'\otimes b''$,~$c=c'\otimes c''\in\AA$
and
\begin{subequations}
\begin{align}
  h([a], [b], [c]) &=
  \bigl[a' E([b'],[c'])\otimes E([a''],[c''|b''])\bigl] \\
  &\quad + \bigl[E([a'],[b'|c'])\otimes E([b''],[c''])\,a''\bigl].
\end{align}
\end{subequations}
(We are again ignoring signs here.)
\end{remark}
  
\begin{question}
  Is $\bB(A'\otimes A'')$ an $A_{\infty}$-algebra
  if $A'$~and~$A''$ are homotopy Gerstenhaber algebras?
\end{question}

\section{Proof of the main result}
\label{sec:proof}

\def\tE{E}
\def\tmu{\mu}

In Section~\ref{sec:construction}
we constructed a map~$G_{a}\colon BA\to B(A,A,A)$
for each~$a\in BA$. They can be assembled into
a map~$G\colon BA\otimes BA\to B(A,A,A)$. We now study its differential.

Denote the left and right action of~$A$
on~$B(A,A,A)$ by $\mu_{L}$~and~$\mu_{R}$, respectively,
and let $\beta$ be the twisting cochain
\begin{equation}
 \beta=\epsilon_{BA}\otimes\alpha_{BA}\colon BA\otimes BA\to R\otimes A=A. 
\end{equation}

\begin{proposition}
  The differential of~$G$ is
  \begin{equation*}
    d(G) = \mu_{L}(\beta\otimes G)\Delta_{BA\otimes BA}
      + \mu_{R}(G\otimes(E-\beta))\Delta_{BA\otimes BA}.
  \end{equation*}
\end{proposition}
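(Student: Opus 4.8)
The plan is to prove the identity by induction on the length $k$ of $a=[a_1|\cdots|a_k]$, using the recursive definition of the maps~$G_a$. As both sides are maps $BA\otimes BA\to B(A,A,A)$, it suffices to compare their restrictions to $\{a\}\otimes BA$ for each homogeneous~$a$. Throughout I keep $B(A,A,A)$ in the form $A\otimes BA\otimes A$, so that the boundary of an element produced by~\eqref{definition-M} is computed from the tensor-product differential of the constituent maps together with the extra term~$\partial'$ of~\eqref{partial-BAAA}; this $\partial'$-correction must be carried along at every step.

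The cases $k\le1$ are direct. For $k=0$ the identity unwinds, using $E_{\Bone}=\alpha$ and the definition of~$\beta$, to the defining property of~$G_{\Bone}$. For $k=1$ one differentiates $G_{[a_1]}=\bigl((E'_{[a'_1]}\otimes\mu_{A''})\otimes 1\otimes(\mu_{A'}\otimes E''_{[a''_1]})\bigr)\Delta^{(3)}$: since $\Delta^{(3)}$, $\mu_{A'}$ and $\mu_{A''}$ are chain maps, Leibniz leaves only $d(E'_{[a'_1]})\otimes\mu_{A''}$ and $\mu_{A'}\otimes d(E''_{[a''_1]})$, into which one substitutes the level-$3$ Hirsch identity~\eqref{differential-E-hga} for $A'$ and for~$A''$. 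Adding $\partial'$ and sorting the resulting terms by the coassociativity of $\Delta^{(3)}$ and of~$\Delta$ gives exactly the right-hand side on $[a_1]\otimes BA$: the three summands in~\eqref{differential-E-hga} — the term involving the differential of the index, the term with $\alpha$ on the left, and the term with $\alpha$ on the right — go respectively into $-G_{da}$, $\mu_L(\beta\otimes G)\Delta_{BA\otimes BA}$ and $\mu_R(G\otimes(E-\beta))\Delta_{BA\otimes BA}$.

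For $k>1$ we have $G_a=M(E'_{[a'_1]},E''_{[a''_1]},G_{[a_2|\cdots|a_k]})$, and I would apply the Leibniz rule to each map appearing in~\eqref{definition-M}. This produces: the terms $d(E'_{[a'_1]})\otimes\mu_{A''}$ and $\mu_{A'}\otimes d(E''_{[a''_1]})$, rewritten via~\eqref{differential-E-hga}; the differential of $\Delta\nabla^{(3)}$, rewritten by iterating~\eqref{d-concat}; the differential of $(\sigma^{-1}\otimes 1\otimes 1)G_{[a_2|\cdots|a_k]}$, rewritten by the induction hypothesis; and the $\partial'$-correction on the target. It then remains to regroup these into the three asserted summands. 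The pieces $-E'_{d[a'_1]}$, $-E''_{d[a''_1]}$ and the contribution of $-G_{d[a_2|\cdots|a_k]}$ from the inductive formula assemble into~$-G_{da}$. The pieces carrying $\alpha$ on the \emph{left} (on the $A'$- or the $A''$-side), together with the $d\nabla^{(3)}$-terms and the corresponding half of~$\partial'$, collapse back through one application of~$M$ — this is where the concatenation identities~\eqref{t-concat-diag-l} and~\eqref{t-concat-diag-r} are used — and give $\mu_L(\beta\otimes G)\Delta_{BA\otimes BA}$. Symmetrically, the pieces carrying $\alpha$ on the \emph{right}, the $(E-\beta)$-part of the inductive formula for $G_{[a_2|\cdots|a_k]}$, and the remaining half of~$\partial'$ collapse into $\mu_R(G\otimes(E-\beta))\Delta_{BA\otimes BA}$; the subtraction of~$\beta$ is precisely what cancels the double contribution coming from the~$\Bone$-columns, which is what lets the recursion close.

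I expect the main obstacle to be organizational rather than conceptual: there are many terms and many Koszul signs — from $\sigma^{-1}$ and from the sign conventions built into $E'_{[a'_1]}\otimes\mu_{A''}$ and $\mu_{A'}\otimes E''_{[a''_1]}$ — and the real work is to recognize which sub-collections of terms reassemble into $G_a$, $G_{[a_2|\cdots|a_k]}$, $E_a$ and $E_{[a_2|\cdots|a_k]}$. Getting the $\partial'$-correction to line up with the $\beta$-subtraction is the delicate point; once the correct groupings are identified, each of the required equalities is a lengthy but routine computation using only~\eqref{d-concat}, \eqref{t-concat-diag-l}, \eqref{t-concat-diag-r} and the level-$3$ identities~\eqref{conditions-level-3}.
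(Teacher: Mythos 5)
Your overall strategy (induction on the length of $a$, Leibniz rule in the description of $G_a$ via~\eqref{definition-M}, substitution of~\eqref{differential-E-hga} and~\eqref{d-concat}, use of \eqref{t-concat-diag-l}--\eqref{t-concat-diag-r}, then regrouping) is the same as the paper's. But your bookkeeping of how the terms regroup has a genuine gap at the decisive point. You claim that $-G_{da}$ is assembled from $-E'_{d[a'_1]}$, $-E''_{d[a''_1]}$ and the $-G_{d[a_2|\cdots|a_k]}$ contribution of the induction hypothesis. That cannot be the whole of $-G_{da}$: the bar differential $da$ also contains the term in which $a_1$ is multiplied with $a_2$, and the corresponding summand $\pm G_{[a_1a_2|a_3|\cdots|a_k]}$ is produced by none of the three pieces you list. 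In the actual computation it arises from the summand $\mu(E'_{[a'_1]}\otimes\alpha)\Delta$ of~\eqref{differential-E-hga} (the ``$\alpha$ to the \emph{right} of $E'$'' piece), which after applying~\eqref{t-concat-diag-l} combines with the leading factors of $G_{[a_2|\cdots|a_k]}$; recombining $E'_{[a'_1]}$ with $E'_{[a'_2]}$ into $E'_{[a'_1a'_2]}$, and likewise on the $A''$ side, requires the product identity~\eqref{product-E-hga} applied in \emph{both} tensor factors (with a separate treatment of $k=2$ and $k>2$). This is exactly where the level~$3$ hypothesis is indispensable; you cite \eqref{conditions-level-3} only globally at the end and never locate where~\eqref{product-E-hga} enters, while your explicit grouping actively excludes the term it must create.

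Relatedly, your left/right sorting of the $\alpha$-terms does not match how they actually recombine: the piece with $\alpha$ to the left of $E''$, i.e.\ $M(\tilde E',\mu(\alpha\otimes\tilde E'')\Delta,\tilde G)$, yields the right-hand $\partial'$-type correction $(1\otimes1\otimes\mu)(1\otimes1\otimes\alpha\otimes1)(1\otimes\Delta\otimes1)G$, not a contribution to $\mu_L(\beta\otimes G)\Delta$; and the piece with $\alpha$ to the right of $E''$ yields, via~\eqref{t-concat-diag-r}, a cancellation against part of the induction hypothesis plus the $i=1$ term $(1\otimes1\otimes\mu)(G_{[a_1]}\otimes E_{[a_2|\cdots|a_k]})\Delta$ of the right-action sum, rather than folding wholesale into $\mu_R(G\otimes(E-\beta))\Delta$ as you describe. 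So while the skeleton of your plan is the paper's, the announced regroupings would not close as stated; the missing identification of the $a_1a_2$-merge term through~\eqref{product-E-hga} is the substantive omission, not merely an organizational one.
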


\begin{proof}
  We again identify $B(A,A,A)$ with~$A\otimes B A\otimes A$.
  Taking equation~\eqref{partial-BAAA} into account,
  we have to show
  \begin{subequations}
  \begin{align}
    d(G_{a})  &= - G_{d a} \\
    &\quad + (\tmu\otimes1\otimes1)(\alpha\otimes G_{a})\Delta_{BA} \\
    &\quad + \sum_{i=1}^{k} (-1)^{|[a_1|\cdots|a_{i}]|} (1\otimes1\otimes\tmu)
    (G_{[a_1|\cdots|a_{i}]}\otimes \tE_{[a_{i+1}|\cdots|a_k]})\Delta_{BA} \\
    &\quad - (\tmu\otimes1\otimes1)(1\otimes\alpha\otimes 1\otimes 1)(1\otimes\Delta\otimes 1)G_{a} \\
    &\quad + (1\otimes1\otimes\tmu)(1\otimes 1\otimes\alpha\otimes 1)(1\otimes\Delta\otimes 1)G_{a}
  \end{align}
  \end{subequations}
  for all~$a=[a_{1}|\cdots|a_{k}]\in B A$
  We proceed by induction on~$k$.
  Write $\tilde E'=\tE'_{[a'_{1}]}$ and $\tilde E''=\tE''_{[a''_{1}]}$.
  Recall that we have
  \begin{align}
    \bigl|\tE'_{[a'_{1}]}\bigr| &= |a'_{1}|, &
    \bigl|\tE''_{[a''_{1}]}\bigr| &= |a''_{1}|, &
    \bigl|G_{a}\bigr| &= |a|+1.
  \end{align}  

  For~$k=1$, \ie, $a=[a'_{1}\otimes a''_{1}]\in\sA$,
  we have 
  \begin{subequations}
  \begin{align}
    d(G_{a}) &= \bigl((d(\tE'_{[a'_{1}]})\otimes\mu) \otimes1\otimes(\mu\otimes \tE''_{[a''_{1}]})\bigr)\Delta^{(3)} \\
    &\quad + (-1)^{|a'_{1}|} \bigl((\tE'_{[a'_{1}]}\otimes\mu)\otimes1\otimes(\mu\otimes d(\tE''_{[a''_{1}]}))\bigr)\Delta^{(3)} \\
    \intertext{using formula~\eqref{differential-E-hga}} 
    &= - \bigl((\tE'_{d[a'_{1}]}\otimes\mu) \otimes1\otimes(\mu\otimes \tE''_{[a''_{1}]})\bigr)\Delta^{(3)} \\
    &\quad - (-1)^{|a'_{1}|} \bigl((\tE'_{[a'_{1}]}\otimes\mu)\otimes1\otimes(\mu\otimes \tE''_{d[a''_{1}]})\bigr)\Delta^{(3)} \\
    &\quad + (\tmu\otimes1\otimes1)\bigl(\alpha\otimes(\tE'_{[a'_{1}]}\otimes\mu)\otimes1\otimes(\mu\otimes \tE''_{[a''_{1}]})\bigr)\Delta^{(4)} \\
    &\quad + (-1)^{|a'_{1}|-1} (\tmu\otimes1\otimes1)\bigl((\tE'_{[a'_{1}]}\otimes\mu)\otimes\alpha\otimes1\otimes(\mu\otimes \tE''_{[a''_{1}]})\bigr)\Delta^{(4)} \\
    &\quad + (-1)^{|a'_{1}|} (1\otimes1\otimes\tmu)\bigl((\tE'_{[a'_{1}]}\otimes\mu)\otimes1\otimes\alpha\otimes(\mu\otimes \tE''_{[a''_{1}]})\bigr)\Delta^{(4)} \\
    &\quad + (-1)^{|a'_{1}|+|a''_{1}|-1} (1\otimes1\otimes\tmu) \\
    &\quad\quad \bigl((\tE'_{[a'_{1}]}\otimes\mu)\otimes1\otimes(\mu\otimes \tE''_{[a''_{1}]})\otimes\alpha\bigr)\Delta^{(4)} \\
    &= - G_{d a} \\ 
    &\quad + (\tmu\otimes1\otimes1)(\alpha\otimes G_{[a_{1]}})\Delta \\
    &\quad + (-1)^{|[a_{1}]|} (1\otimes1\otimes\tmu)(G_{[a_{1}]}\otimes\tE_{\Bone })\Delta \\
    &\quad - (\tmu\otimes1\otimes1)(1\otimes\alpha\otimes 1\otimes 1)(1\otimes\Delta\otimes 1)G_{a} \\
    &\quad + (1\otimes1\otimes\tmu)(1\otimes 1\otimes\alpha\otimes 1)(1\otimes\Delta\otimes 1)G_{a}.
  \end{align}
  \end{subequations}
  
  For $k>1$, we write $\tilde a = [a_{2}|\cdots|a_{k}]$ and
  $\tilde G = G_{\tilde a}$.
  Then, using definition~\eqref{definition-M},
  \begin{subequations}
  \begin{align}
    \mfbf{d(G_{a})} &= d\bigl(M(\tilde E', \tilde E'', \tilde G)\bigr) \\
    &= M(d(\tilde E'), \tilde E'', \tilde G)
      + (-1)^{|a'_{1}|} M(\tilde E', d(\tilde E''), \tilde G) \\
    &\quad + (-1)^{|a'_{1}|+|a''_{1}|} (1\otimes1\otimes\tmu_{A})
      \bigl((\tilde E'\otimes\mu_{A''})\otimes1\otimes(\mu_{A'}\otimes \tilde E'')\otimes1\bigr) \\
    &\quad \quad (1\otimes\Delta d(\nabla^{(3)})\otimes1)
    (1\otimes1\otimes\sigma^{-1}\otimes1\otimes1)
    (1\otimes1\otimes\tilde G)\Delta^{(3)} \\
    &\quad + (-1)^{|a'_{1}|+|a''_{1}|-1} M(\tilde E', \tilde E'', d(\tilde G)) \\
    \intertext{using \eqref{d-concat}, $\tilde\mu(1\otimes\sigma^{-1})=\sigma^{-1}\mu(\sigma\otimes1)$ and $\tilde\mu(\sigma^{-1}\otimes1)=-\sigma^{-1}\mu(1\otimes\sigma)$}
    &= M(d(\tilde E'), \tilde E'', \tilde G)
    + (-1)^{|a'_{1}|} M(\tilde E', d(\tilde E''), \tilde G) \\
    &\quad + (-1)^{|a_{1}|} M\bigl(\tilde E', \tilde E'', (\tmu\otimes1\otimes1)(\alpha\otimes\tilde G)\Delta\bigr) \\
    &\quad + (-1)^{|a_{1}|-1} M\bigl(\tilde E',\tilde E'',(\tmu\otimes1\otimes1)(1\otimes\alpha\otimes1\otimes1)(1\otimes\Delta\otimes1)\tilde G\bigr) \\
    &\quad + (-1)^{|a_{1}|-1} M(\tilde E', \tilde E'', d(\tilde G)) \; ;
  \end{align}
  \end{subequations}
  \begin{subequations}
  \begin{align}
    \mfbf{G_{d_{\otimes} a}} &= M(\tE'_{d [a'_{1}]}, \tilde E'', \tilde G)
      + (-1)^{|a'_{1}|} M(\tilde E', \tE''_{d[a''_{1}]}, \tilde G) \\
      &\quad + (-1)^{|a_{1}|-1} M(\tilde E', \tilde E'', G_{d_{\otimes}\tilde a}) \; ;
  \end{align}
  \end{subequations}
  \begin{subequations}
  \begin{align}
    \mathsmash{\sum_{i=2}^{k}
    M(\tilde E',\tilde E'',(1\otimes1\otimes\tmu)
    (G_{[a_2|\cdots|a_{i}]}\otimes \tE_{[a_{i+1}|\cdots|a_k]})\Delta)} \\
    &= \sum_{i=2}^{k} (1\otimes1\otimes\tmu)
    (G_{[a_1|\cdots|a_{i}]}\otimes \tE_{[a_{i}|\cdots|a_k]})\Delta \; ;
  \end{align}
  \end{subequations}
  \begin{align}
    \mfbf{M(\tilde E',\tilde E'',G_{\partial\tilde a})} &=
    \sum_{i=2}^{k-1} (-1)^{|[a_{2}|\cdots|a_{i}]|}
      G_{[a_{1}|\cdots|a_{i}a_{i+1}|\cdots|a_{k}]} \; ;
  \end{align}
  \begin{equation}
    \mfbf{M(\tmu(\alpha\otimes \tilde E')\Delta,\tilde E'',\tilde G)} = (\tmu\otimes1\otimes1)(\alpha\otimes G)\Delta \; ;
  \end{equation}
  \begin{equation}
    \mfbf{M(\tilde E',\tmu(\alpha\otimes \tilde E'')\Delta,\tilde G)} =
    (-1)^{|a'_{1}|} (1\otimes1\otimes\tmu)(1\otimes1\otimes\alpha\otimes1)(1\otimes\Delta\otimes1)G \; ;
  \end{equation}
  and
  \begin{subequations}
  \begin{align}
    \mathsmash{M(\tmu(\tilde E'\otimes\alpha)\Delta,\tilde E'',\tilde G)} \\
    &= (-1)^{|a''_{1}|} (\tmu\otimes1\otimes\tmu)
    \bigl((\tilde E'\otimes\mu)\otimes1\otimes1\otimes(\mu\otimes \tilde E'')\otimes1\bigr)
    \\ &\quad\quad
    (1\otimes1\otimes\Delta\nabla^{(3)}\otimes1)
    (1\otimes(\alpha\otimes1)\Delta\otimes (\sigma^{-1}\otimes1\otimes1)\tilde G)\Delta^{(3)} \\
    \intertext{using \eqref{t-concat-diag-l} and the fact that $\tilde G$ maps to $A\otimes BA\otimes A$}
    &= (-1)^{|a''_{1}|} (\tmu\otimes1\otimes1)(1\otimes\alpha\otimes1\otimes1)(1\otimes\Delta\otimes1)M(\tilde E',\tilde E'',\tilde G) \\
    \label{before-k-2}
    &\quad + (-1)^{|a''_{1}|} (1\otimes1\otimes\tmu) \\
    &\quad\quad \bigl(1\otimes1\otimes(\mu\otimes \tilde E'')\otimes1\bigr)(\tmu\otimes\Delta\otimes1)\bigl((\tilde E'\otimes\mu)\otimes \tilde G\bigr)\Delta \\
    \intertext{We consider the case $k=2$ first.}
    &= (-1)^{|a''_{1}|}  (\tmu\otimes1\otimes1)(1\otimes\alpha\otimes1\otimes1)(1\otimes\Delta\otimes1)M(\tilde E',\tilde E'',\tilde G) \\
    &\quad + (-1)^{|a''_{1}|} (1\otimes1\otimes\tmu) \\
    &\quad\quad \bigl(1\otimes1\otimes(\mu\otimes \tilde E'')\otimes1\bigr)(\tmu\otimes\Delta\otimes1)\bigl((\tilde E'\otimes\mu)\otimes G_{[a_{2}]}\bigr)\Delta \\
    \intertext{using \eqref{product-E-hga} in the form $\tmu\bigl((\tE'_{[a'_{1}]}\otimes\mu)\otimes(\tE'_{[a'_{2}]}\otimes\mu)\bigr)\Delta = (-1)^{|a'_{1}|-1}\tE'_{[a'_{1}a'_{2}]}\otimes\mu$}
    &= (-1)^{|a''_{1}|} (\tmu\otimes1\otimes1)(1\otimes\alpha\otimes1\otimes1)(1\otimes\Delta\otimes1)G \\
    &\quad + (-1)^{|a_{1}|+|a'_{1}||a''_{1}|-1}
      (1\otimes1\otimes\tmu)\bigl(1\otimes1\otimes(\mu\otimes \tE''_{[a''_{1}]})\otimes1\bigr) \\
    &\quad\quad  (1\otimes\Delta\otimes1) G_{[a'_{1}a'_{2}\otimes a''_{2}]}\Delta \\
    \intertext{using \eqref{product-E-hga} in the form $\tmu\bigl((\mu\otimes\tE''_{[a''_{1}]})\otimes(\mu\otimes\tE''_{[a''_{2}]})\bigr)\Delta = (-1)^{|a''_{1}|-1}\mu\otimes\tE''_{[a''_{1}a''_{2}]}$}
    &= (-1)^{|a''_{1}|} (\tmu\otimes1\otimes1)(1\otimes\alpha\otimes1\otimes1)(1\otimes\Delta\otimes1)G \\
    &\quad + (-1)^{|a'_{1}|+|a''_{1}||a'_{2}|} G_{[a'_{1}a'_{2}\otimes a''_{1}a''_{2}]} \\
    \intertext{using $a_{1}a_{2} = (-1)^{|a''_{1}||a'_{2}|} a'_{1}a'_{2}\otimes a''_{1}a''_{2}$}
    &= (-1)^{|a''_{1}|} (\tmu\otimes1\otimes1)(1\otimes\alpha\otimes1\otimes1)(1\otimes\Delta\otimes1)G \\
    &\quad + (-1)^{|a'_{1}|} G_{[a_{1}a_{2}]}.
   \intertext{Continuing at~\eqref{before-k-2} for~$k>2$ and using the same identities as before,}
    &= (-1)^{|a''_{1}|} (\tmu\otimes1\otimes1)(1\otimes\alpha\otimes1\otimes1)(1\otimes\Delta\otimes1)G \\
    &\quad + (-1)^{|a_{1}|+|a'_{1}||a''_{1}|-1}
      (1\otimes1\otimes\tmu)\bigl(1\otimes1\otimes(\mu\otimes \tE''_{[a''_{1}]})\otimes1\bigr) \\
    &\quad\quad  (1\otimes\Delta\otimes1) M(\tE'_{[a'_{1}a'_{2}]},\tE''_{[a''_{2}]},G_{[a_{3}|\cdots|a_{k}]}) \\
    &= (-1)^{|a''_{1}|} (\tmu\otimes1\otimes1)(1\otimes\alpha\otimes1\otimes1)(1\otimes\Delta\otimes1)G \\
    &\quad + (-1)^{|a'_{1}|+|a''_{1}||a'_{2}|} M(\tE'_{[a'_{1}a'_{2}]},\tE''_{[a''_{1}a''_{2}]},G_{[a_{3}|\cdots|a_{k}]}) \\
    &= (-1)^{|a''_{1}|} (\tmu\otimes1\otimes1)(1\otimes\alpha\otimes1\otimes1)(1\otimes\Delta\otimes1)G \\
    &\quad + (-1)^{|a'_{1}|} G_{[a_{1}a_{2}|a_{3}|\cdots|a_{k}]}.
  \end{align}
  \end{subequations}
  So the result is the same for all~$k\ge2$.
  
  \begin{subequations}
  \begin{align}
    \mathsmash{M(\tilde E',\tmu(\tilde E''\otimes\alpha)\Delta,\tilde G)} \\
    &= (1\otimes1\otimes\tmu(\tmu\otimes1))
    \bigl((\tilde E'\otimes\mu)\otimes1\otimes(\mu\otimes \tilde E'')\otimes\alpha\otimes1\bigr)
    \\ &\quad\quad
    (1\otimes\Delta^{(3)}\nabla^{(3)}\otimes1)
    (1\otimes1\otimes\sigma^{-1}\otimes1\otimes1)
    (1\otimes1\otimes \tilde G)\Delta^{(3)} \\
    \intertext{using \eqref{t-concat-diag-r}}
    &= - M\bigl(\tilde E',\tilde E'',(1\otimes1\otimes\tmu)(1\otimes 1\otimes\alpha\otimes 1)(1\otimes\Delta\otimes 1)\tilde G\bigr) \\
    &\quad + (1\otimes1\otimes\tmu(1\otimes\tmu))
    \bigl((\tilde E'\otimes\mu)\otimes1\otimes(\mu\otimes \tilde E'')\otimes 1\otimes1\bigr)
    \\ &\quad\quad
    (1\otimes\Delta\otimes1\otimes\epsilon\otimes1)
    (1\otimes1\otimes \tilde G)\Delta^{(3)} \\
    &= - M\bigl(\tilde E',\tilde E'',(1\otimes1\otimes\tmu)(1\otimes 1\otimes\alpha\otimes 1)(1\otimes\Delta\otimes 1)\tilde G\bigr) \\
    &\quad + (1\otimes1\otimes\tmu)(G_{[a_{1}]}\otimes \tE_{\tilde a})\Delta.
  \end{align}
  \end{subequations}
  Putting all terms together finishes the proof.
\end{proof}


\begin{proposition}
  The map~$E\colon BA\otimes BA\to A$
  is a twisting cochain.
  Moreover, if $E'$~and~$E''$ are normalized,
  then so is $E$.
\end{proposition}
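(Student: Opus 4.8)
The plan is to deduce both assertions from the formula for $d(G)$ established in the preceding proposition by pushing it forward along the augmentation. The first step is the (essentially formal) observation that $\epsilon_{B(A,A,A)}\colon B(A,A,A)\to M\otimes_{A}N=A\otimes_{A}A=A$ is not merely a chain map but a morphism of $A$-bimodules: under the identification $B(A,A,A)=A\otimes BA\otimes A$ it is just $\mu_{A}(1\otimes\epsilon_{BA}\otimes1)$, so $\epsilon_{B(A,A,A)}\mu_{L}=\mu_{A}(1\otimes\epsilon_{B(A,A,A)})$ and $\epsilon_{B(A,A,A)}\mu_{R}=\mu_{A}(\epsilon_{B(A,A,A)}\otimes1)$. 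Applying $\epsilon_{B(A,A,A)}$ to $d(G)=\mu_{L}(\beta\otimes G)\Delta_{BA\otimes BA}+\mu_{R}(G\otimes(E-\beta))\Delta_{BA\otimes BA}$ and using $E=\epsilon_{B(A,A,A)}G$ therefore gives
\begin{equation*}
  d(E)=\mu_{A}(\beta\otimes E)\Delta_{BA\otimes BA}+\mu_{A}(E\otimes(E-\beta))\Delta_{BA\otimes BA}.
\end{equation*}

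The second step is to recognise the right-hand side as $E\cup E$, \ie, as the right-hand side of~\eqref{condition-Hirsch-differential}. The key point is that $\beta=\epsilon_{BA}\otimes\alpha_{BA}$ is supported on $\Bone\otimes B_{1}A$, where $\beta(\Bone\otimes[b_{1}])=b_{1}=\alpha([b_{1}])=E(\Bone\otimes[b_{1}])$; hence $\mu_{A}(\beta\otimes E)\Delta_{BA\otimes BA}$ supplies exactly the $i=0$ end of the cup-product sum, while $E-\beta$, which vanishes on $\Bone\otimes BA$, deletes from $\mu_{A}(E\otimes E)\Delta_{BA\otimes BA}$ precisely the terms whose second factor is $E(\Bone\otimes-)=\alpha$, \ie, the $i=k$ end. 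The underlying bookkeeping is that the edge corrections in the $d(G)$ formula are exactly what the bar differential $\partial'$ of~\eqref{partial-BAAA} had absorbed into the two $A$-slots of $B(A,A,A)$, so that applying $\epsilon_{B(A,A,A)}$ reassembles them into the two boundary terms of~\eqref{condition-Hirsch-differential}. The remaining two conditions are the unit conditions: $E_{\Bone}=\alpha$ holds by construction, and $E_{a}(\Bone)=\alpha(a)$ is proven by induction on $k$. For $k=1$, $\Delta^{(3)}(\Bone)=\Bone^{\otimes3}$ gives $G_{[a_{1}]}(\Bone)=(a'_{1}\otimes1)\otimes\Bone\otimes(1\otimes a''_{1})$, whose image under $\epsilon_{B(A,A,A)}$ is $a'_{1}\otimes a''_{1}=\alpha([a_{1}])$; for $k>1$, evaluating $M(\tilde E',\tilde E'',\tilde G)$ on $\Bone$ and using $\Delta^{(3)}(\Bone)=\Bone^{\otimes3}$, $\nabla^{(3)}(\Bone^{\otimes3})=\Bone$ and the inductive value $\epsilon_{B(A,A,A)}G_{[a_{2}|\cdots|a_{k}]}(\Bone)=\alpha([a_{2}|\cdots|a_{k}])=0$, one obtains $E_{a}(\Bone)=0=\alpha(a)$.

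For the normalization statement, assume $E'$ and $E''$ are normalized. Unwinding the recursion for $G$ (equivalently, inspecting the explicit shape of $E(a,b)$ illustrated in Examples~\ref{example-small} and~\ref{example-2-5}), every summand of $E(a,b)$ is a product in $\AA$ of scalars $b'_{i}$, $b''_{j}$ and of values $E'([a'_{i}],w'_{i})$, $E''([a''_{m}],w''_{m})$, where the $w'_{i}$ and the $w''_{m}$ run through certain pieces into which $[b'_{1}|\cdots|b'_{l}]$ and $[b''_{1}|\cdots|b''_{l}]$ have been cut; moreover, as explained in Example~\ref{example-2-5}, in any nonzero summand the argument $w'_{i}$ of $E'_{[a'_{i}]}$ has length $\ge1$ whenever $i>1$, and for $k=1$ with $l\ge1$ at least one of the arguments of $E'_{[a'_{1}]}$ and $E''_{[a''_{1}]}$ is nonempty. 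Granting this, if some $b_{j}=1$ then in each summand $b'_{j}$ either occurs inside one of the bar words $w'_{i}$, which forces $E'([a'_{i}],w'_{i})=0$ by normalization of $E'$, or inside a $\mu_{A'}$-factor, in which case $b''_{j}$ occurs inside some $w''_{m}$ and $E''([a''_{m}],w''_{m})=0$; and if some $a_{i}=1$ then $E'([a'_{i}],w'_{i})=E'([1],w'_{i})$ vanishes unless $w'_{i}=\Bone$, in which case $E''([a''_{i}],w''_{i})=E''([1],w''_{i})$ is applied to a nonempty word and vanishes. This establishes~\eqref{condition-Hirsch-normalization-2}. For~\eqref{condition-Hirsch-normalization-1}, write a typical summand as $c'\otimes c''$; then $\epsilon(c'\otimes c'')=\epsilon_{A'}(c')\,\epsilon_{A''}(c'')$, and since $\epsilon_{A'}$ kills $E'([a'_{i}],w'_{i})$ and $\epsilon_{A''}$ kills $E''([a''_{m}],w''_{m})$ whenever the respective bar word has length $\ge1$, this product vanishes as soon as $k+l>1$.

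I expect the main obstacle to be the identification carried out in the second step: keeping the signs straight so that the $\beta$-correction terms produced by applying $\epsilon_{B(A,A,A)}$ match the $i=0$ and $i=k$ boundary terms of~\eqref{condition-Hirsch-differential}, that is, checking that passing from the ``truncated'' identity on $B(A,A,A)$ to the identity on $A$ really does restore the full twisting-cochain equation. The normalization argument is longer but essentially mechanical once the shape of the summands of $E(a,b)$ has been read off from the recursion.
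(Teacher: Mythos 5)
Your overall strategy -- push the differential formula for $G$ forward along $\epsilon_{B(A,A,A)}$, using that it is a chain map and a morphism of $A$-bimodules -- is the same as the paper's, but the identification in your second step, which is the crux of the whole proof, does not go through as you set it up. By your own accounting, $\mu_{A}(\beta\otimes E)\Delta$ supplies the $i=0$ summand of~\eqref{condition-Hirsch-differential}, while $\mu_{A}(E\otimes(E-\beta))\Delta$ equals $E\cup E$ \emph{minus} the $i=k$ summand (the one whose second factor is $E_{\Bone}=\alpha$). The sum is therefore $E\cup E$ plus the $i=0$ term minus the $i=k$ term, and these two boundary terms are genuinely different: for $a=[a_{1}]$, $b=[b_{1}]$ they are, up to sign, $b_{1}a_{1}$ and $a_{1}b_{1}$, the two products in the cup-one relation. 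So the equation you reach is not the twisting-cochain equation, and no sign bookkeeping can repair it -- the $i=k$ term has to be produced, not deleted. The paper's proof gets it because it applies $\epsilon$ to the fixed-$a$ formula established \emph{inside} the proof of the preceding proposition, which explicitly contains the summand $(1\otimes1\otimes\mu)(G_{a}\otimes E_{\Bone})\Delta$ together with the two $\partial'$-correction terms; the latter are exactly what the chain-map property of $\epsilon$ with respect to the full two-sided bar differential absorbs, leaving $-E_{da}$ plus the sum over $i=0,\dots,k$. (The assembled statement with $E-\beta$, read the way you read it, does not reproduce that fixed-$a$ formula, so you cannot simply quote it; you must either rederive the fixed-$a$ identity or correct the assembled one.) As it stands, step 2 is a genuine gap, and you flagged it yourself without resolving it.

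Two further points. For condition~\eqref{condition-Hirsch-0} with $k>1$, your evaluation of $M(\tilde E',\tilde E'',\tilde G)$ on $\Bone$ is not correct: the input of $\nabla^{(3)}$ is not $\Bone^{\otimes3}$ but $\Bone\otimes[g_{1}]\otimes g_{2}$, where $\tilde G(\Bone)=g_{1}\otimes g_{2}\otimes g_{3}$, so $E_{a}(\Bone)$ does not factor through $\epsilon\,\tilde G(\Bone)$; the component with empty middle slot involves $(\mu_{A'}\otimes E''_{[a''_{1}]})$ applied to a nonempty word whose $A''$-entries are units, and the vanishing comes from that (the paper records it as $G_{a}(\Bone)\in(A'\otimes1)\otimes BA\otimes A$), not from $\epsilon\,\tilde G(\Bone)=0$. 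Finally, your normalization argument rests on the combinatorial recipe of Example~\ref{example-2-5}, which the paper only illustrates and never proves; the paper instead argues by induction on the recursion for $G$ (middle length at least one when a unit entry occurs, and image of $E'_{[a'_{1}]}\otimes\mu$ in $\Ab'\otimes A''\subset\Ab$ when $a'_{1}\in\Ab'$). Your route could likely be made rigorous, but as written it relies on an unproved description of the summands, and the case $k\ge2$, $a_{1}=1$ with empty first $E'$-argument still needs the observation that the cut condition forces the first $E''$-argument to be nonempty.
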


\begin{proof}
  To verify \eqref{condition-Hirsch-differential}, we compute:
  \begin{subequations}
  \begin{align}
    d(\tE_{a}) &= \epsilon\, d(G_{a}) \\
    &=  - \epsilon\, G_{da}
    + \tmu(\alpha\otimes\tE_{a})\Delta \\
    &\quad + \sum_{i=1}^{k} (-1)^{|[a_1|\cdots|a_{i}]|} \tmu(\tE_{[a_1|\cdots|a_{i}]}\otimes\tE_{[a_{i+1}|\cdots|a_k]})\Delta \\
    &= - \tE_{d a} 
    + \sum_{i=0}^{k} (-1)^{|[a_1|\cdots|a_{i}]|} \tmu\bigl(\tE_{[a_1|\cdots|a_{i}]}\otimes \tE_{[a_{i+1}|\cdots|a_k]}\bigr)\Delta.
  \end{align}
  \end{subequations}
  Condition~\eqref{condition-Hirsch-1} holds by definition.
  Condition~\eqref{condition-Hirsch-0} holds for~$k=1$
  because $G_{[a_{1}]}(\Bone )=(a'_{1}\otimes1)\otimes \Bone \otimes(1\otimes a''_{1})$.
  For~$k>1$, one similarly has
  $G_{a}(\Bone )\in (A'\otimes1)\otimes BA\otimes A$,
  hence $\epsilon(G_{a}(\Bone))=0$
  by condition~\eqref{condition-Hirsch-0} for~$E''$.
  (This is related to the length condition in Example~\ref{example-2-5}.)
  
  Assume now that $E'$~and~$E''$ are normalized.
  For the proof of~\eqref{condition-Hirsch-normalization-2}
  one inductively shows $G_{a}(b)\in\bigoplus_{m\ge1} A\otimes B_{m}A\otimes A$
  if some~$a_{i}=1$ or some~$b_{j}=1$.
  For~\eqref{condition-Hirsch-normalization-1}, notice that
  the image of~$\tE'_{[a'_{1}]}\otimes\mu$ lies
  in~$\Ab'\otimes A''\subset\Ab$ if~$a'_{1}\in \Ab'$,
  and analogously for~$\mu\otimes\tE''_{[a''_{1}]}$.
  Hence, $\epsilon(G_{a}(b))\in\Ab$
  if $k\ge1$ and~$a_{1}\in\Ab$.
\end{proof}

We now turn to the shuffle maps
\begin{subequations}
\label{shuffle-bar}
\begin{align}
  \nabla\colon BA'\otimes BA'' &\to B(A'\otimes A''), \\
  \nabla\colon \bB A'\otimes \bB A'' &\to \bB(A'\otimes A''),
\end{align}
\end{subequations}
\cf~\cite[Sec.~7.1]{McCleary:2001}.

\begin{proposition}
  The shuffle maps~\eqref{shuffle-bar} are multiplicative.
\end{proposition}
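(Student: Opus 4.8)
The plan is to reduce the multiplicativity of the shuffle maps to the already-established structure of $G$ and $E$ on $A=A'\otimes A''$. Multiplicativity of the first map in~\eqref{shuffle-bar} means that the square
\begin{equation*}
  \begin{array}{ccc}
    (BA'\otimes BA')\otimes(BA''\otimes BA'') & \longrightarrow & (BA'\otimes BA'')\otimes(BA'\otimes BA'') \\
    \downarrow & & \downarrow \\
    BA'\otimes BA'' & \longrightarrow & B(A'\otimes A'')
  \end{array}
\end{equation*}
commutes, where the top map is the interchange, the left map is $f'\otimes f''$ with $f',f''$ the Hirsch multiplications on $BA',BA''$, the right map is $\nabla\otimes\nabla$, and the bottom map is the Hirsch multiplication $f$ on $BA$ determined by the twisting cochain $E$ constructed in Section~\ref{sec:construction}. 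Since all four maps are morphisms of coalgebras with counit, and a coalgebra morphism into $BA$ is uniquely determined by its composite with the universal twisting cochain $\alpha$, it suffices to check that the two ways around the square agree after post-composition with $\alpha$. Concretely, I would show
\begin{equation*}
  E\bigl(\nabla(a'\otimes a''),\,\nabla(b'\otimes b'')\bigr)
  = \pm\,\alpha f'(a'\otimes b')\otimes\mu_{A''}(\cdots) \pm \cdots
\end{equation*}
— that is, that $\alpha\circ f\circ(\nabla\otimes\nabla)\circ(\text{interchange})$ equals $\alpha\circ\nabla\circ(f'\otimes f'')$, the latter being $\bigl(\alpha_{BA'}f'\bigr)\otimes\mu_{A''}+\mu_{A'}\otimes\bigl(\alpha_{BA''}f''\bigr)$ composed appropriately via the formulas for $\nabla$ from Section~\ref{sec:notation}. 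For the normalized map in~\eqref{shuffle-bar}, this follows once the unnormalized version is known, because $\nabla$ restricts to the normalized bar constructions and $E$ is normalized when $E'$, $E''$ are.

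First I would lift the statement to the level of $G$ rather than $E$: I claim that $G_{\nabla(a'\otimes a'')}\bigl(\nabla(b'\otimes b'')\bigr)$ can be computed from $G'_{a'}(b')$ and $G''_{a''}(b'')$ (the corresponding two-sided-bar objects for $A'$ and $A''$, which in the level-$3$ case are built directly out of $E'_{[\,\cdot\,]}$ and $E''_{[\,\cdot\,]}$) by a shuffle-type formula in $B(A,A,A)=A\otimes BA\otimes A$, and that applying $\epsilon_{B(A,A,A)}$ then yields exactly the shuffle of $\alpha_{BA'}f'$ and $\alpha_{BA''}f''$. The inductive definition $G_a=M(E'_{[a'_1]},E''_{[a''_1]},G_{[a_2|\cdots|a_k]})$ is well suited to an induction on the length of $a$: the operator $M$ of~\eqref{definition-M} is built from $\Delta^{(3)}$, the concatenation $\nabla^{(3)}$, $\mu_A=\mu_{A'}\otimes\mu_{A''}$ and the maps $E'_{[\,\cdot\,]}\otimes\mu_{A''}$, $\mu_{A'}\otimes E''_{[\,\cdot\,]}$, all of which interact with the shuffle map in a controlled way. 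The key inputs are the coproduct-on-concatenation identities~\eqref{d-concat}, \eqref{t-concat-diag-l}, \eqref{t-concat-diag-r}, together with the compatibility of the shuffle map $\nabla\colon BA'\otimes BA''\to BA$ with $\Delta$ (it is a coalgebra map) and with $\nabla^{(3)}$, plus the defining formulas \eqref{product-E-hga} and \eqref{differential-E-hga} for the level-$3$ structures.

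The main obstacle I anticipate is bookkeeping of the shuffles and of the Koszul signs: unwinding $\nabla(a'\otimes a'')$ as a signed sum of $[\,c_1|\cdots|c_k\,]$ with $c_i=a'_{p(i)}\otimes 1$ or $1\otimes a''_{q(i)}$, feeding each summand through the recursion for $G$, and recognizing the result as a shuffle of the $A'$- and $A''$-sides requires care, especially because the normalization properties of $E'$ and $E''$ (namely $E'_{[1]}=\alpha$ on $[1]$, $E'_{[a'_i]}(1)=a'_i$ but $E'$ vanishes on longer bar words containing a $1$, and the analogue for $E''$) are exactly what makes the mixed terms collapse — this is the same phenomenon behind the length condition explained in Example~\ref{example-2-5}. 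A clean way to organize this is to verify the identity first on $B_1A'\otimes B_1A''$ paired against arbitrary arguments (the base case of the induction, where it reduces to~\eqref{E-1-1} and its higher-$l$ analogues compared with the $l$-fold shuffle), and then to propagate along the recursion using that $\epsilon_{B(A,A,A)}\circ M(\tilde E',\tilde E'',\tilde G)$ has the explicit ``multiply the three strands'' form. Once the $G$-level statement is in place, applying $\epsilon_{B(A,A,A)}$ and invoking uniqueness of coalgebra morphisms into $BA$ closes the argument; the normalized case is then immediate from the last sentence of the preceding proposition.
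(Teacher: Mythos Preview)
Your opening reduction is exactly what the paper does: both routes around the square are coalgebra maps into $BA$, so it suffices to compare their twisting cochains, and the normalized case follows from the unnormalized one. After that, however, you take a detour that the paper avoids.

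The paper does \emph{not} attempt to compute $G_{\nabla(a'\otimes a'')}\bigl(\nabla(b'\otimes b'')\bigr)$ in general or to relate it to hypothetical objects~$G'$,~$G''$ by a shuffle-type formula. Instead it makes a vanishing argument. Writing $a=a'\otimes a''\in B_{p'}A'\otimes B_{p''}A''$ and $b=b'\otimes b''\in B_{q'}A'\otimes B_{q''}A''$, the twisting cochain of the left path $\alpha\nabla(f'\otimes f'')$ picks out only the $(1,0)$ and $(0,1)$ components of $f'(a',b')\otimes f''(a'',b'')$, hence vanishes unless $p'=q'=0$ or $p''=q''=0$ (by counitality of $f',f''$). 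For the right path, one checks from~\eqref{condition-Hirsch-1},~\eqref{condition-Hirsch-0} and the recursion for~$G$ that $E\bigl(\nabla(a'\otimes a''),\nabla(b'\otimes b'')\bigr)=0$ whenever $p'>0$ and ($p''>0$ or $q''>0$), and symmetrically. So both sides reduce to the ``pure'' cases $a=a'\otimes\Bone$, $b=b'\otimes\Bone$ (resp.\ $a=\Bone\otimes a''$, $b=\Bone\otimes b''$), where equality follows by an easy induction on the recursive definition of~$G_{a}$.

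Two concrete issues with your plan: first, there is no $G'$ or $G''$ in the paper---$A'$ and $A''$ are level~$3$ Hirsch algebras specified directly by $E'$, $E''$, and your ``shuffle-type formula in $B(A,A,A)$'' is left undefined. Second, your proposed base case $B_{1}A'\otimes B_{1}A''$ is already a \emph{mixed} case in the sense above: the left-path twisting cochain is zero there, and \eqref{E-1-1} is irrelevant (it computes $E([a_{1}],[b_{1}])$ for genuine tensors $a_{1}=a'_{1}\otimes a''_{1}$, not for shuffles of $[a'_{1}]\otimes[a''_{1}]$). What you call the ``collapse of mixed terms'' is not a bookkeeping obstacle to be managed inside a big computation; it \emph{is} the argument. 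Promote it to the organizing principle and the proof becomes a few lines.
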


\begin{proof}
  It suffices to consider the unnormalized bar construction.  
  We have to show that the diagram
  \begin{diagram}[small]
    (BA'\otimes BA'')\otimes(BA'\otimes BA'')
    & \rTo^{\nabla\otimes\nabla} & B(A'\otimes A'')\otimes B(A'\otimes A'') \\
    \dTo & & \\
    (BA'\otimes BA')\otimes(BA''\otimes BA'') & & \dTo_{\mu} \\
    \dTo_{\mu\otimes\mu} & & \\
    BA'\otimes BA'' & \rTo^{\nabla} & B(A'\otimes A'')
  \end{diagram}
  commutes. Because all maps are morphisms of coalgebras,
  it is enough to verify that the two associated
  twisting cochains coincide.
  
  Take two elements~$a=a'\otimes a''\in B_{p'}A'\otimes B_{p''}A''$,
  $b=b'\otimes b''\in B_{q'}A'\otimes B_{q''}A''$.
  The twisting cochain of the composition via~$BA'\otimes BA''$
  vanishes unless $p'=q'=0$ or $p''=q''=0$.
  Consider now the twisting cochain of the other composition.
  It follows from properties \eqref{condition-Hirsch-1}~and~\eqref{condition-Hirsch-0}
  and the inductive definition of~$G_{a}$  
  that for~$p'>0$ this twisting cochain vanishes if $p''>0$ or~$q''>0$.
  The case $p''>0$ is analogous. It is therefore enough
  to check the two cases $a=a'\otimes \Bone $, $b=b'\otimes \Bone $
  and $a=\Bone \otimes a''$, $b=\Bone \otimes b''$.
  That both twisting cochains agree follows again inductively from the definition of~$G_{a}$.
\end{proof}

\section{Operadic reformulation}
\label{sec:operad}

It is useful to translate Theorem~\ref{tensor-level-3}
into the language of operads.
Let $\Ass$ be the operad of associative augmented unital $R$-algebras.
We write $\mu\in\Ass(2)$ for the multiplication,
$\epsilon\in\Ass(1)$ for the augmentation and $\iota\in\Ass(0)$
for the unit.
An operad under~$\Ass$ is a morphism of operads~$\Ass\to\PP$.

We define the \newterm{Hirsch operad}~$\HH$
to be the dg~operad under~$\Ass$
generated
by 
operations~$E_{kl}\in\HH(k+l)_{1-k-l}$
subject to the relations~\eqref{conditions-Hirsch}
and~\eqref{conditions-Hirsch-normalization}
(modulo the desuspension)
plus the generators and relations for~$\Ass$.
A Hirsch algebra then is the same as an algebra over~$\HH$.

Let $\Hthree$ be the dg~operad under~$\Ass$
describing level~$3$ Hirsch algebras.
It is the quotient of~$\HH$ by the relations~$E_{k l}=0$ for~$k>1$.
Equivalently,
it is generated by 
operations~$E_{1 k}\in\Hthree(1+k)_{-k}$ and~$E_{0 1}$
subject to the relations \eqref{conditions-Hirsch}
and~\eqref{conditions-Hirsch-normalization}
with~\eqref{condition-Hirsch-differential} replaced
by~\eqref{conditions-level-3}, and
of course again plus the generators and relations for~$\Ass$.


\begin{theorem}\label{thm:H-H3H3}
  The construction in Section~\ref{sec:construction}
  defines a morphism~$f\colon\HH\to\Hthree\otimes\Hthree$
  of dg~operads under~$\Ass$.
\end{theorem}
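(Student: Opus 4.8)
The plan is to observe that Theorem~\ref{thm:H-H3H3} is essentially a restatement of the work already carried out in Sections~\ref{sec:construction} and~\ref{sec:proof}, re-expressed in the free/operadic setting, and that the main task is to check that everything done for honest tensor products $A'\otimes A''$ of level~$3$ Hirsch algebras was ``natural'' enough to take place in the free objects. Concretely, a morphism $f\colon\HH\to\Hthree\otimes\Hthree$ of dg~operads under~$\Ass$ is determined by the images of the generators $E_{kl}$ (together with the requirement that the structure maps $\mu$, $\epsilon$, $\iota$ of $\Ass$ be preserved, which is automatic since both sides are operads under~$\Ass$), and such an assignment extends to a well-defined morphism precisely when the images satisfy the defining relations \eqref{conditions-Hirsch} and \eqref{conditions-Hirsch-normalization} of~$\HH$, and commute with the differential.

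First I would make the target concrete: $\Hthree\otimes\Hthree$ is the (arity-wise, Hadamard) tensor product of operads, which is again an operad under~$\Ass$ via $\mu\mapsto\mu\otimes\mu$, $\epsilon\mapsto\epsilon\otimes\epsilon$, $\iota\mapsto\iota\otimes\iota$; its algebras are exactly pairs consisting of a level~$3$ Hirsch structure on a first tensor factor and one on a second, combined on the tensor product. I would note that $\Hthree\otimes\Hthree$ contains distinguished elements $E'_{1k}$, $E''_{1k}$ coming from the two factors, and that the formulas of Section~\ref{sec:construction} --- the definition of $G_{[a_1]}$, the recursion $G_a=M(E'_{[a'_1]},E''_{[a''_1]},G_{[a_2|\cdots|a_k]})$, and $E_a=\epsilon_{B(A,A,A)}G_a$ --- are written entirely in terms of the structure operations $\mu_{A'},\mu_{A''}$, the diagonals $\Delta^{(j)}$ and concatenations $\nabla^{(j)}$ of the bar construction, and the operations $E'_{[a'_i]}$, $E''_{[a''_i]}$. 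Hence the very same formulas, read as composites of corresponding elements of $\Hthree\otimes\Hthree$, define elements $f(E_{kl})\in(\Hthree\otimes\Hthree)(k+l)$ in the correct degree $1-k-l$ (the degree bookkeeping being exactly the one already verified in Section~\ref{sec:proof}). So I would set $f$ on generators by these formulas.

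Next I would verify that $f$ respects the relations. The point is that the two Propositions of Section~\ref{sec:proof} --- that $E$ is a (normalized, if $E'$ and $E''$ are) twisting cochain, equivalently that the $E_{kl}$ satisfy \eqref{conditions-Hirsch} and \eqref{conditions-Hirsch-normalization}, and the computation of $d(G)$ --- are proved by purely formal manipulations: they use only the level-$3$ relations \eqref{conditions-level-3} for $E'$ and $E''$, the coalgebra/concatenation identities \eqref{d-concat}--\eqref{t-concat-diag-r}, and associativity of $\mu_{A'},\mu_{A''}$. None of these steps uses anything specific about a particular pair of algebras; they are identities between composites of the generating operations. Therefore the identical arguments, carried out in $\Hthree\otimes\Hthree$ with the generators $E'_{1k},E''_{1k}$ in place of the maps $E'_{[a'_i]},E''_{[a''_i]}$, show that the $f(E_{kl})$ satisfy \eqref{conditions-Hirsch} and \eqref{conditions-Hirsch-normalization} in $\Hthree\otimes\Hthree$, and that $f$ commutes with differentials. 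By the universal property of $\HH$ (it is generated by the $E_{kl}$ subject to exactly these relations, plus those of $\Ass$, which are preserved by construction), $f$ extends uniquely to a morphism of dg~operads under~$\Ass$. I would also remark that evaluating this operadic morphism on the pair $(A',A'')$ recovers the Hirsch structure on $A'\otimes A''$ of Theorem~\ref{tensor-level-3}, so the two theorems are indeed equivalent.

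The main obstacle is not conceptual but presentational: one must make precise the translation between the ``element-wise'' formulas of Section~\ref{sec:construction}, written with arguments $a=[a_1|\cdots|a_k]$, $a_i=a'_i\otimes a''_i$, and genuine operadic composites in the free operad $\HH$ and in $\Hthree\otimes\Hthree$ --- in particular, that the maps ``$\tilde E'\otimes\mu_{A''}$'' and ``$\mu_{A'}\otimes\tilde E''$'' of \eqref{definition-M} correspond to well-defined operad elements with the stated signs, and that the bar-construction operators $\Delta^{(j)}$, $\nabla^{(j)}$, $\sigma^{-1}$ have operadic counterparts (they do: $\Delta$ and $\nabla$ on $BA$ are built from iterated $\mu$ and the identity, so in the operad they are certain sums of tree compositions of $\mu$, $\epsilon$, $\iota$). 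Once this dictionary is fixed, the verification of the relations is literally the computation already performed in the two Propositions of Section~\ref{sec:proof}, now read in the free setting, so I would simply invoke those proofs rather than repeating them.
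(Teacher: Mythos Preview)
Your proposal is correct, and its overall architecture agrees with the paper: define $f$ on the generators $E_{kl}$ by the formulas of Section~\ref{sec:construction}, then argue that the defining relations of~$\HH$ are satisfied in~$\Hthree\otimes\Hthree$.

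The difference lies in how that last step is carried out. You propose to redo the computations of Section~\ref{sec:proof} \emph{inside} $\Hthree\otimes\Hthree$, after setting up a dictionary translating the element-wise maps $\tilde E'\otimes\mu_{A''}$, $\Delta^{(j)}$, $\nabla^{(j)}$, $\sigma^{-1}$, etc., into operadic composites; you correctly flag this translation as the main obstacle. The paper sidesteps this bookkeeping entirely. It first notes that the construction automatically gives a morphism $\PP\to\Hthree\otimes\Hthree$ from the \emph{free} dg~operad~$\PP$ under~$\Ass$ generated by the~$E_{kl}$. What Section~\ref{sec:proof} literally proves is that for any pair of $\Hthree$-algebras $A'$,~$A''$ the further composite $\PP\to\Hthree\otimes\Hthree\to\End(A')\otimes\End(A'')$ factors through~$\HH$. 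Taking $A'$ and~$A''$ to be \emph{free} $\Hthree$-algebras makes the second arrow injective, so the relations already hold in~$\Hthree\otimes\Hthree$. Thus the paper treats the element-wise proofs as a black box and transfers them back to the operad via free algebras, whereas your route requires checking that those proofs are genuinely ``formal'' enough to be read in the universal object. Both are valid; the paper's argument is shorter and avoids the translation work you identified, while yours is more self-contained in that it does not appeal to properties of free algebras.
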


\begin{proof}
  Let $\PP$ be the free dg~operad under~$\Ass$
  generated by the operations~$E_{kl}$.
  It is clear that our construction defines a morphism
  of dg~operads under~$\Ass$
  \begin{equation}
    \PP\to\Hthree\otimes\Hthree.
  \end{equation}
  Moreover, we know that the relations for~$\HH$ hold
  whenever $\Hthree\otimes\Hthree$
  acts on a tensor product of two $\Hthree$-algebras~$A'$ and~$A''$.
  More precisely, we have proven that the composed map
  \begin{equation}
    \PP\to\Hthree\otimes\Hthree\to \End(A')\otimes\End(A'')
  \end{equation}
  factors through~$\HH$.
  Because $A'$~and~$A''$ can be free $\Hthree$-algebras
  (\cf~\cite[Sec.~I.1.4]{MarklShniderStasheff:2002}),
  this implies that the necessary relations hold already
  in~$\Hthree\otimes\Hthree$.
\end{proof}

\begin{example}
  The homotopy Gerstenhaber algebra structure
  on the cochain complex~$C^{*}(X)$
  of a simplicial set~$X$ is constructed by dualizing
  a ``homotopy Gerstenhaber coalgebra'' structure
  on the chain complex~$C(X)$.
  Therefore, for simplicial sets $X$~and~$Y$
  there is a natural action of~$\Hthree\otimes\Hthree$
  on the 
  complex dual to~$C(X)\otimes C(Y)$,
  and the canonical map
  \begin{equation}
    C^{*}(X)\otimes C^{*}(Y) \to \bigl(C(X)\otimes C(Y)\bigr)^{*}
  \end{equation}
  is a morphism of~$\Hthree\otimes\Hthree$-algebras, hence
  of $\HH$-algebras.  
  Note however that the dual of the shuffle map
  \begin{equation}
    C^{*}(X\times Y)
    \stackrel{\nabla^{*}}\longrightarrow
    \bigl(C(X)\otimes C(Y)\bigr)^{*}
  \end{equation}
  is \emph{not} a morphism of Hirsch algebras.
  ($\nabla^{*}$ already fails to commute with the operation~\eqref{E-1-1}.)
\end{example}
  
An analogous remark applies to Hochschild cochains.

\begin{example}
  Let $A$ be a cosemisimplicial $\Hthree$-algebra.
  By this we mean a collection~$A^{q}$, $q\ge0$, of
  $\Hthree$-algebras together with morphisms
  $d_{i}\colon A^{q}\to A^{q+1}$, $0\le i\le q+1$, satisfying
  the usual coface relations,
  \cf~\cite[Def.~8.40]{McCleary:2001}.
  Then the associated total complex~$\Tot A^{*}$ is an algebra over~$\Hthree\otimes\Hthree$
  in the following way: Let $E\otimes E'\in\Hthree(m)_{n}\otimes\Hthree(m)_{n'}$,
  and $a_{i}\in A^{q_{i}}$ for~$1\le i\le m$.
  Set $q=\sum_{i}q_{i}-n'$.
  Via the coface operators,
  $E'$ determines morphisms~$\phi_{i}\colon A^{q_{i}}\to A^{q}$
  in the same way as it acts on the unnormalized cochains of a simplicial set.
  We can therefore set
  \begin{equation}
    (E\otimes E')(a_{1},\dots,a_{m})
    = E\bigl(\phi_{1}(a_{1}),\dots,\phi_{m}(a_{m})\bigr)\in A^{q}.
  \end{equation}
  (If $q<q_{i}$ for some~$i$, we define the result to be~$0$.)
  
  An important special case of this is
  the Mayer--Vietoris double complex
  \begin{equation}
    C^{pq}(\UU) = \prod_{i_{0}<\dots<i_{q}}C^{p}(U_{i_{0}}\cap\dots\cap U_{i_{q}};R)
  \end{equation}
  associated to an ordered cover~$\UU=(U_{i})_{i\in I}$ of a simplicial set,
  \cf~\cite[\S\S 8,~14]{BottTu:1982} for instance.
  In this case Theorem~\ref{thm:H-H3H3} says
  that $\Tot C^{* *}(\UU)$ has the structure of a Hirsch algebra
  which extends the familiar dg~algebra structure.
  Note also that the canonical inclusion map
  \begin{equation}
    C^{*}(X;R) \to \Tot C^{* *}(\UU),
    \quad
    \alpha \mapsto \bigl(\left.\alpha\right|_{U_{i}}\bigr)_{i\in I}\in C^{* 0}(\UU;R)
  \end{equation}
  is a morphism of Hirsch algebras
  because for~$n'>0$ the maps~$\phi_{1}$,~\ldots,~$\phi_{m}$
  described above vanish on the image of the inclusion map,
  and for~$n'=0$ they must all be the identity map.
\end{example}

\begin{remark}
  Assume $R=\Z_{2}$ and let $\tau=(12)\in S_{2}$.
  Note that $\mu$ is basis of~$\Hthree(2)_{0}$ over~$R[S_{2}]$,
  and $E_{11}$ is one for~$\Hthree(2)_{-1}$.
  A direct computation shows that up to applying~$\tau$
  and transposing the factors,
  $h=\mu\otimes E_{11} + E_{11}\otimes\tau\mu\in(\Hthree\otimes\Hthree)(2)_{-1}$
  is the only solution to~$d(h)=\mu\otimes\mu + \tau\mu\otimes\tau\mu$.
  Hence, our definition~\eqref{E-1-1} of~$f(E_{11})$
  is essentially the only possible choice.
  Together with~$d(f(E_{21}))\ne0$, this also proves that one cannot hope
  for a morphism~$\Hthree\to\Hthree\otimes\Hthree$ of dg~operads under~$\Ass$
  because condition~\eqref{product-E-hga} never holds.
\end{remark}

But of course one may ask:

\begin{question}
  Is $\HH$ a dg~Hopf operad under~$\Ass$? In other words,
  is the tensor product of two Hirsch algebras again a Hirsch algebra?
\end{question}

\end{document}